\theoremstyle{plain}
\newtheorem{thm}{Theorem}
\newtheorem{cor}[thm]{Corollary}
\newtheorem{lemma}[thm]{Lemma}
\newtheorem{pblm}[thm]{Problem}
\theoremstyle{remark}
\newtheorem*{ex}{\textbf{Example}}
\newtheorem{rmk}[thm]{\textbf{Remark}}
\numberwithin{equation}{section}
\newcommand\diag{\text{diag}}
\newcommand\dist{\text{dist}}
\newcommand\M{\mathbb M}
\newcommand\R{\mathbb R}
\newcommand\SH{\mathbb{SH}}
\newcommand\s{\mathbb{S}}
\newcommand\VERT{|||}
\title[Spherical harmonics with maximal $L^p$ $(2<p\le6)$ norm growth]{Spherical harmonics with maximal $L^p$ $(2<p\le6)$ norm growth}
\author{Xiaolong Han}
\address{Department of Mathematics, Australian National University, Canberra, ACT 0200, Australia}
\email{Xiaolong.Han@anu.edu.au}
\subjclass[2010]{35P20, 33C55, 58J50}
\keywords{Eigenfunction estimates, spherical harmonics, maximal $L^p$ norm growth, density}
\thanks{Research is partially supported by the Australian Research Council through Discovery Project DP120102019.} 
\begin{document}
\maketitle

\begin{abstract}
In this paper, we show that there exists a positive density subsequence of orthonormal spherical harmonics which achieves the maximal $L^p$ norm growth for $2<p\le6$, therefore giving an example of a Riemannian surface supporting such subsequence of eigenfunctions. This answers the question proposed by Sogge and Zelditch \cite{SZ2}. Furthermore, we provide an explicit lower bound on the density in this example.
\end{abstract}

\section{Introduction}

In an $n$-dimensional smooth and compact Riemannian manifold $(\M,g)$ without boundary, denote $\Delta_g$ the Laplace-Beltrami operator associated with $g$. Then $-\Delta=-\Delta_g$ has eigenvalues $0<\lambda_0^2\le\lambda_1^2\le\lambda_2^2\le\cdots$ counting multiplicities, and the eigenfunctions $\{u_j\}_{j=1}^\infty$ satisfy:
$$-\Delta u_j=\lambda^2_ju_j,$$ 

One can form an orthonormal basis $\{u_j\}_{j=1}^\infty$ in $L^2(\M)$. In this paper, we use $\|\cdot\|_p$ to mean $\|\cdot\|_{L^p(\M)}$. It was proved by Sogge \cite{So1, So2} that $\|u_j\|_p\lesssim\lambda_j^{\sigma(p)}$ with
\begin{equation}\label{eq:S}
\sigma(p)=\begin{cases}
\frac{n-1}{2}\left(\frac12-\frac1p\right) & 2<p\le\frac{2(n+1)}{n-1}=p_n,\\
n\left(\frac12-\frac1p\right)-\frac12 & p_n\le p\le\infty.
\end{cases}
\end{equation}

On the Riemannian surface, $p_2=6$. \eqref{eq:S} is sharp on the sphere $\s^2\subset\R^3$ with round metric, and saturated by certain spherical harmonics. We now introduce the terminology: The spherical harmonics are the homogeneous harmonic polynomials restricted on the sphere; we use $\SH_k$ to denote the set of such functions with homogeneous degree $k$. For each $u\in\SH_k$, $u$ is an eigenfunction of $-\Delta$ on $\s^2$ with eigenvalue $k(k+1)$:
$$-\Delta u=k(k+1)u.$$

Then the eigenfrequency $\lambda=\sqrt{k(k+1)}\approx k$; the multiplicity of the eigenvalue $k(k+1)$ is the dimension of its eigenspace, $\dim\SH_k=2k+1$; and $L^2(\s^2)=\oplus_{k=0}^\infty\SH_k$. See e.g. \cite[\S16]{Z2}. In particular, if we use the longitudinal coordinate $\phi\in[0,\pi]$ and latitudinal coordinates $\theta\in[0,2\pi)$ so that $\s^2\ni x=(\sin\phi\cos\theta,\sin\phi\sin\theta,\cos\phi)$, then one can write the standard orthonormal basis of $\SH_k$ as $\{Y_m^k\}_{m=-k}^k$:
$$Y_m^k(\phi,\theta)=C_{k,m}P_k^m(\cos\phi)e^{im\theta},$$
in which $C_{k,m}$ is the $L^2$ normalisation factor, and $P_k^m$ is the associated Legendre polynomial. The standard spherical harmonics can be categorized as
\begin{enumerate}[(i).]
\item $m=0$: $Z_k=Y_0^k$ are called zonal harmonics;
\item $-k<m<k$: $Y_m^k$ are called tesseral harmonics;
\item $m=\pm k$: $Q_{\pm k}=Y^k_{\pm k}$ are called sectoral harmonics or Gaussian beams.
\end{enumerate}

The Gaussian beams concentrate around the equator $\phi=\pi/2$ and achieve the maximal $L^p$ norms for $2<p\le6$:
\begin{equation}\label{eq:Qpnorm}
\|Q_{\pm k}\|_p=C_pk^{\sigma(p)}\approx k^{(1/2-1/p)/2},\quad2<p\le6,
\end{equation}
where $C_p$ is an absolute constant depending only on the sphere. While zonal harmonics concentrate around the north pole $\phi=0$ and the south pole $\phi=\pi$, and achieve the maximal $L^p$ norms for $p\ge6$:
\begin{equation}\label{eq:Zpnorm}
\|Z_k\|_p\approx k^{2(1/2-1/p)-1/2},\quad6\le p\le\infty.
\end{equation}

Since $\Delta$ is rotational invariant, rotating Gaussian beams or zonal harmonics generates $L^p$ norm maximizers as eigenfunctions concentrating on any great circle or any point.

However, Sogge's $L^p$ estimate \eqref{eq:S} is not sharp in many manifolds. In fact, \eqref{eq:S} is very rarely sharp, e.g. Sogge and Zelditch \cite[Theorem 1.4]{SZ1} proved that for a generic Riemannian metric $g$ on $\M$, $\|u\|_\infty=o\left(\lambda^{(n-1)/2}\right)$. Even on the sphere where \eqref{eq:S} is sharp, the maximizers (Gaussian beams and zonal harmonics) are quite sparse in the standard basis.

It is natural to ask whether the maximal $L^p$ norm estimates can be achieved in a manifold; and if so, whether the maximizers resemble Gaussian beams (concentrating on closed geodesics) or zonal harmonics (concentrating on self focal points) on the sphere. These questions have been studied by a lot of mathematicians. We refer to Zelditch's survey articles \cite{Z1, Z2} for general references and recent development in this area.

Following the notation in \cite{SZ1}, we say that $(\M,g)$ has maximal $L^p$ growth, if there exists an orthonormal eigenfunction basis $\{u_j\}\subset L^2(\M)$ such that
$$\limsup_{j\to\infty}\|u_j\|_p=\Omega\left(\lambda_j^{\sigma(p)}\right).$$

Here, $\Omega\left(\lambda_j^{\sigma(p)}\right)$ means $O\left(\lambda_j^{\sigma(p)}\right)\text{ but not }o\left(\lambda_j^{\sigma(p)}\right)$. Therefore, there exists a subsequence $\{u_{j_m}\}$ such that
$$\lim_{m\to\infty}\frac{\|u_{j_m}\|_p}{\lambda_{j_m}^{\sigma(p)}}\ge c_\M$$
for some constant $c_\M$ depending only on $(\M,g)$. 

If $p_n\le p\le\infty$, Sogge, Toth, and Zelditch proved some conditions involving ``blow-down'' points for the manifold to have maximal $L^p$ growth. This in some sense resembles zonal harmonics with maximal $L^p$ ($p_n\le p\le\infty$) norm growth at a point, in which case all the geodesics emitting from this point return back at the same time. We refer to their papers \cite{SZ1, STZ} for details and other related results.

If $2<p\le p_n$, fewer results are known. In the following, we concentrate on the case of Riemannian surfaces. Sogge \cite{So3} (see also Bourgain \cite{B2}) proved that $\|u_j\|_p=o\left(\lambda_j^{\sigma(p)}\right)$ for $2<p<6$ and all eigenfunctions if and only if 
$$\sup_{\gamma\in\Pi}\int_{\dist_g(\gamma,y)\le\lambda_j^{-1/2}}|u_j(y)|^2dy=o(1),$$
where $\Pi$ is the space of all unit-length geodesics in $\M$, and $\dist_g$ is the geodesic distance associated with $g$. (This result has been recently extended to higher dimensions by Blair and Sogge \cite{BS}.)

Eigenfunction estimates are intimately related to the dynamical properties of geodesic flows. If $(\M,g)$ has nonpositive curvature, then the geodesic flow on $\M$ is automatically ergodic. Sogge and Zelditch \cite[Corollary 1.2]{SZ3} proved that in this case, for $\varepsilon>0$ and $2<p<6$ are fixed, there is a $\lambda(\varepsilon,p)<\infty$ so that
\begin{equation}\label{eq:SZ3}
\|u\|_p\le\varepsilon\lambda^{\sigma(p)}\|u\|_2,\quad\text{for all}\quad\lambda>\lambda(\varepsilon,p).
\end{equation}

Duistermaat-Guillemin's small-$o$ improvement on the remainder term in Weyl law expansion \cite{DG} provides a natural condition of the periodic geodesics. Sogge and Zelditch \cite[Theorem 2.1]{SZ2} proved that 
\begin{thm}[Sogge and Zelditch \cite{SZ2}]\label{thm:SZ}
If the measure of periodic geodesics is zero, then 
\begin{eqnarray}\label{eq:SZ}
&&\text{for any orthonormal eigenfunction basis $\{u_j\}$, one can find a full density}\nonumber\\
&&\text{subsequence $\{u_{j_m}\}$ such that $\|u_{j_m}\|_p=o\left(\lambda_{j_m}^{\sigma(p)}\right)$ for $2<p<6$.}
\end{eqnarray}
\end{thm}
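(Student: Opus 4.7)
The plan is to combine (i) the geodesic-tube characterization of $L^p$ concentration for $2<p<6$ on surfaces, due to Sogge \cite{So3} and independently Bourgain \cite{B2} and quoted in the excerpt above; (ii) a small-$o$ Kuznecov-type estimate derived from the Duistermaat-Guillemin hypothesis on periodic geodesics; and (iii) a standard Chebyshev plus diagonal extraction to upgrade an averaged bound to a density-one statement. Set
$$M_j:=\sup_{\gamma\in\Pi}\int_{\dist_g(\gamma,y)\le \lambda_j^{-1/2}}|u_j(y)|^2\,dy.$$
By the tube characterization, $\|u_j\|_p=o(\lambda_j^{\sigma(p)})$ for each $p\in(2,6)$ if and only if $M_j\to 0$; so it suffices to exhibit a density-one subsequence along which $M_j\to 0$.

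For a fixed $\gamma\in\Pi$, I would first establish an averaged Kuznecov-type bound over unit spectral windows. Choose a smooth, nonnegative $\chi$ with $\hat\chi$ compactly supported near $0$ and $\chi(0)=1$, and let $K_\lambda$ be the Schwartz kernel of $\chi(\sqrt{-\Delta}-\lambda)$. Writing $T_\gamma(r):=\{y\in\M:\dist_g(\gamma,y)\le r\}$, one has the identity
$$\sum_j\chi(\lambda_j-\lambda)\int_{T_\gamma(\lambda^{-1/2})}|u_j|^2\,dy \;=\; \int_{T_\gamma(\lambda^{-1/2})}K_\lambda(y,y)\,dy.$$
The H\"ormander parametrix for $e^{-it\sqrt{-\Delta}}$ plus stationary phase decompose $K_\lambda(y,y)$ into the Weyl main term (of size $\lambda$ on a surface) and contributions from geodesic loops through $y$. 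Under the measure-zero-periodic-geodesics assumption, a compactness plus non-stationary-phase argument yields $K_\lambda(y,y)=\lambda/(2\pi)+o(\lambda)$ uniformly in $y$; combined with $\text{Vol}(T_\gamma(\lambda^{-1/2}))\asymp\lambda^{-1/2}$, this gives
$$\sum_j\chi(\lambda_j-\lambda)\int_{T_\gamma(\lambda^{-1/2})}|u_j|^2\,dy \;=\; o(\lambda^{1/2}),$$
uniformly in $\gamma\in\Pi$.

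Next I would trade $\sup_{\gamma}$ for a discrete maximum. The left-hand side is Lipschitz in $\gamma$ in a natural Riemannian metric on $\Pi$ with constant polynomial in $\lambda$, so up to a negligible error $\sup_{\gamma\in\Pi}$ can be replaced by $\max_{\gamma\in N_\lambda}$ over a $\lambda^{-C}$-net $N_\lambda$ of cardinality $O(\lambda^{3C})$ in the three-dimensional space $\Pi$. Summing the per-$\gamma$ bound over $N_\lambda$ with $C$ chosen small, and reassembling dyadically across spectral windows, yields
$$\sum_{\lambda_j\le\lambda}M_j=o(N(\lambda)),\qquad N(\lambda)\asymp\lambda^2.$$
Chebyshev's inequality then gives $\#\{j\le N(\lambda):M_j>\varepsilon\}=o(N(\lambda))$ for every $\varepsilon>0$, and a diagonal argument across $\varepsilon\to 0$ produces the required density-one subsequence.

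The principal difficulty lies in Step 2: obtaining the small-$o$ improvement of the diagonal wave kernel $K_\lambda(y,y)$ uniformly in $y$, and then preserving this uniformity after integration over the geodesic tubes $T_\gamma(\lambda^{-1/2})$ as $\gamma$ varies over $\Pi$. Classical Duistermaat-Guillemin provides such an improvement for the global Weyl remainder, but the tube-integrated version here requires careful microlocal control of the looping set at each base point $y$ and its measure-theoretic dependence on $\gamma$; this is the analytic heart of the argument. The remaining ingredients (net reduction, Chebyshev, diagonalisation) are essentially routine by comparison.
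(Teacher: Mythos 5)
This theorem is not proved in the present paper at all; it is quoted from \cite{SZ2}, so your attempt has to be measured against the argument there. Your Step 1 (the reduction via the Sogge--Bourgain tube characterization to showing $M_j\to0$ along a density-one subsequence) is indeed the correct starting point and coincides with \cite{SZ2}. But Step 2 is already wrong as stated: with $\hat\chi$ supported near $0$ (inside the injectivity radius) the diagonal kernel satisfies $K_\lambda(y,y)=c\lambda+O(1)$ \emph{unconditionally}, so
$\sum_j\chi(\lambda_j-\lambda)\int_{T_\gamma(\lambda^{-1/2})}|u_j|^2\approx c\lambda\cdot\mathrm{Vol}\bigl(T_\gamma(\lambda^{-1/2})\bigr)\approx c\lambda^{1/2}$;
the Weyl main term does not disappear, and no hypothesis on the geodesic flow can make this window sum $o(\lambda^{1/2})$. (If instead you take $\hat\chi$ supported on long time intervals so as to invoke Duistermaat--Guillemin, that theorem improves the \emph{trace}, i.e.\ the integral of $K_\lambda(y,y)$ over all of $\M$, not the pointwise kernel uniformly in $y$; uniform pointwise improvements require a condition on loop directions at each point, which ``periodic geodesics have measure zero'' does not give -- self-focal points are not excluded.) With the correct bound $\Theta(\lambda^{1/2})$, your Chebyshev step only says that for a \emph{fixed} tube the average mass is $O(\lambda^{-1/2})$, which is true on any surface and says nothing about $\sup_\gamma$. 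A sanity check makes the problem visible: in the short-time regime your sketch never actually uses the hypothesis, yet the conclusion must fail on $\s^2$ for the basis constructed in Theorem \ref{thm:main}.

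The deeper gap is Step 3: the supremum over $\gamma$ cannot be handled by a net plus union bound. To dominate $\sup_{\gamma\in\Pi}$ by a maximum over a net you must resolve tubes at scale $\lambda^{-1/2}$ in (at least) transverse position and angle, so the net has cardinality at least of order $\lambda$; summing the per-tube window bound $\Theta(\lambda^{1/2})$ over such a net gives $\gtrsim\lambda^{3/2}$, to be compared with only $\sim\lambda$ eigenvalues in the window, so Chebyshev is vacuous. Taking a coarser $\lambda^{-C}$-net with ``$C$ small'' does not rescue this, because the Lipschitz constant of $\gamma\mapsto\int_{T_\gamma(\lambda^{-1/2})}|u_j|^2$ involves $\sup|u_j|^2$, which on a surface can be as large as $\lambda$; making the net error $o(1)$ forces $C$ large, i.e.\ an even bigger net. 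Since the best conceivable hypothesis-driven gain per tube is $o(1)$ while the union-bound loss is polynomial, no choice of parameters closes the gap. The actual Sogge--Zelditch proof avoids summing over tubes: given $\varepsilon>0$, cover the measure-zero periodic set by an open $U\subset S^*\M$ with Liouville measure $<\varepsilon$; for tubes whose lifts stay away from $U$, non-periodicity plus time averaging along the flow (Egorov and the eigenfunction property) makes the tube mass small for \emph{all} sufficiently large eigenvalues, while the mass of every tube near $U$ is dominated, uniformly in $\gamma$, by the single matrix element $\langle\mathrm{Op}(\chi_U)u_j,u_j\rangle+o(1)$, whose spectral average is $\approx\mu(U)<\varepsilon$ by the local Weyl law; Chebyshev applied to this one operator, followed by a diagonal argument in $\varepsilon$, yields the density-one subsequence. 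That domination of all ``bad'' tubes by one fixed pseudodifferential cutoff is the idea your sketch is missing, and without it the strategy collapses.
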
 

Here, we define the density $D$ of a subsequence $\{j_m\}\subset\mathbb N$ as
$$D=\lim_{N\to\infty}\frac{\#\{j_m\le N\}}{N}.$$

When $D=0$ ($>0$ or $=1$), we call such subsequence a zero (positive or full) density subsequence. 

On $\s^2$, Gaussian beams maximize the $L^p$ ($2<p\le6$) norms. However, they are very sparse among the standard spherical harmonics basis: In each $\SH_k$, there are only two Gaussian beams $Q_{\pm k}$, then 
\begin{equation}\label{eq:Qdensity}
\frac{2N}{\sum_{k=0}^{N-1}2k+1}=O\left(\frac1N\right)\to0,\quad\text{as}\quad N\to\infty,
\end{equation}
which means that the subsequence of Gaussian beams is of density zero, while other standard spherical harmonics $Y_m^k$ ($-k<m<k$), with smaller $L^p$ ($2<p\le6$) norms, constitute the ``majority'' of this basis. 

When $p=4$, additional technical tools are available. Sogge and Zelditch \cite[Theorem 3.1]{SZ2} were able to compute the average $L^4$ norm of the standard spherical harmonics in $\SH_k$:
$$\frac{1}{2k+1}\sum_{m=-k}^m\int_{\s^2}|Y^k_m|^4\approx\log k,\quad k\to\infty.$$

Note that $\|Q_{\pm k}\|_4\approx k^{1/8}$. This implies that among the standard spherical harmonics, most of them have very slow $L^4$ norm growth. Furthermore, they \cite[\S5]{SZ2} conjectured and sketched a proof that, for the random orthonormal eigenfunction bases, the expectation of the average $L^4$ norm is bounded by a constant.

These results lead to the question: Can one find an orthonormal eigenfunction basis in $\SH_k$ with average $L^4$ norm growing faster than logarithmically, e.g. polynomially, as $k\to\infty$? In fact, the existence of a positive density subsequence with polynomial $L^4$ norm growth implies that the average also grows polynomially. However, it was unclear that the answer would be true for any Riemannian surface; and they proposed the following stronger version of this question.

\begin{pblm}[Sogge and Zelditch \cite{SZ2}]\label{pblm:SZ}
For which Riemannian surface (if any) does there exist an orthonormal basis of eigenfunctions $\{u_j\}$ for which there exists a positive density subsequence $\{u_{j_m}\}$ so that it achieves the maximal $L^4$ norm growth, i.e., $\|u_{j_m}\|_4=\Omega\left(\lambda_{j_m}^{1/8}\right)$? Or is \eqref{eq:SZ} in Theorem \ref{thm:SZ} valid on any compact Riemannian surface?
\end{pblm}

In this paper, we answer the first question positively, therefore provide the example that \eqref{eq:SZ} can not be valid on arbitrary Riemannian surface without any condition on the geodesics. In fact, our theorem, for a positive density subsequence of orthonormal spherical harmonics, extends the answer to all $2<p\le 6$; and a density bound is provided.

\begin{thm}\label{thm:main}
There exists a constant $0<D<1$ that, for all $k\ge k_0$, if $m=\lfloor D(2k+1)\rfloor$, the largest integer not greater than $D(2k+1)$, then one can find an orthonormal set $\{u_i\}_{i=1}^{m}\subset\SH_k$ such that $\|u_i\|_p\ge\frac12C_pk^{\sigma(p)}$ for all $i$ and $2<p\le 6$. 

Recall that $C_pk^{\sigma(p)}$ is the $L^p$ norm of a Gaussian beam $Q_k\in\SH_k$. Moreover, one can choose $D$ such that
\begin{equation}\label{eq:D}
(7+1296D)c_0^{-1/D}\le\frac{1}{25},
\end{equation}
where $c_0=e^{1/72}>1$. For example, $D=1/400=0.25\%$ will do.
\end{thm}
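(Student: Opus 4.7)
My approach is to produce the orthonormal family inside each eigenspace $\SH_k$ by symmetrically orthogonalizing a carefully chosen collection of rotated Gaussian beams. Since both $\Delta$ and the $L^p$ norm on $\s^2$ are rotation invariant, for every $R\in\mathrm{SO}(3)$ the rotated beam $v_R:=Q_k\circ R^{-1}$ lies in $\SH_k$ with $\|v_R\|_p=C_pk^{\sigma(p)}$ and $\|v_R\|_2=1$. The plan is to choose rotations $R_1,\ldots,R_M$ with $M=\lfloor D(2k+1)\rfloor$ so that $\{v_{R_i}\}$ is sufficiently close to orthonormal that the symmetric orthogonalization $u_j:=\sum_i (G^{-1/2})_{j,i}\,v_{R_i}$, with $G_{ij}=\langle v_{R_i},v_{R_j}\rangle$, retains the Gaussian-beam $L^p$ mass up to a factor $\frac12$.

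The key computation is the explicit inner-product formula. Writing $Q_k\circ R^{-1}=\sum_m D^k_{m,k}(R)Y_m^k$ in terms of the Wigner $D$-matrix and using unitarity of $D^k$ together with the closed form $d^k_{k,k}(\beta)=\cos^{2k}(\beta/2)$ for the small-$d$ function, one finds
\begin{equation*}
\bigl|\langle v_{R_i},v_{R_j}\rangle\bigr|\;=\;\bigl|D^k_{k,k}(R_i^{-1}R_j)\bigr|\;=\;\cos^{2k}(\beta_{ij}/2),
\end{equation*}
where $\beta_{ij}\in[0,\pi]$ is the angle between the beam axes $\xi_i:=R_i(e_z)$ and $\xi_j:=R_j(e_z)$. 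I then distribute the axes $\xi_1,\ldots,\xi_M$ on $\s^2$ in a deterministic equidistributed configuration (for instance, by partitioning $\s^2$ into latitude bands and seeding each band with a uniform longitudinal grid), and bound the maximal row sum
\begin{equation*}
\varepsilon\;:=\;\max_i\,\sum_{j\neq i}\cos^{2k}(\beta_{ij}/2).
\end{equation*}
The continuum heuristic $\frac{M}{4\pi}\int_{\s^2}\cos^{2k}(\beta/2)\,dA=\frac{M}{k+1}\approx 2D$ suggests that $\varepsilon$ is of order $D$. Converting this into the quantitative bound \eqref{eq:D} combines the pointwise Gaussian estimate $\cos^{2k}(\beta/2)\le e^{-k\beta^2/c}$ on the nearest-neighbour shells (which produces the exponential $c_0^{-1/D}$) with crude counting on the distant axes (which produces the polynomial prefactor $7+1296D$).

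Granting this row-sum control, the remainder is elementary linear algebra. Since $\|G-I\|_{\ell^\infty\to\ell^\infty}\le\varepsilon<1$, the Gram matrix is positive definite and $G^{-1/2}$ exists; a Neumann-type expansion $G^{-1/2}=I-\tfrac12(G-I)+\tfrac{3}{8}(G-I)^2-\cdots$ yields $\|G^{-1/2}-I\|_{\ell^\infty\to\ell^\infty}\lesssim\varepsilon$, and the triangle inequality in $L^p$ gives
\begin{equation*}
\|u_j-v_{R_j}\|_p\;\le\;\sum_i\bigl|(G^{-1/2})_{j,i}-\delta_{ji}\bigr|\,\|v_{R_i}\|_p\;\lesssim\;\varepsilon\,C_pk^{\sigma(p)}.
\end{equation*}
The density bound \eqref{eq:D} is then precisely the smallness of $\varepsilon$ needed so that $\|u_j-v_{R_j}\|_p\le\frac12 C_pk^{\sigma(p)}$ and therefore $\|u_j\|_p\ge\frac12 C_pk^{\sigma(p)}$ for each $j$ and every $2<p\le 6$. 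The principal obstacle is geometric rather than algebraic: one must design the configuration $\{\xi_i\}$ so that $\varepsilon$ is bounded uniformly in $k$ by an explicit function of $D$ alone, sharp enough for \eqref{eq:D} to hold at a positive value of $D$; once all constants in the Gaussian comparison and the Neumann series are pinned down, the numerical verification that $D=1/400$ works is direct.
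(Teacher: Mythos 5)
Your proposal is correct and follows essentially the same route as the paper: rotated Gaussian beams, the Wigner $D$-matrix identity $|\langle v_{R_i},v_{R_j}\rangle|=\cos^{2k}(\beta_{ij}/2)$, a latitude-strip counting bound on the maximal deleted row sum giving the $(7+1296D)c_0^{-1/D}$ control, symmetric orthogonalization by $G^{-1/2}$ expanded as a Neumann series in the max-row-sum ($\ell^\infty\to\ell^\infty$) norm, and the triangle inequality in $L^p$ to conclude $\|u_j\|_p\ge\frac12C_pk^{\sigma(p)}$. The only cosmetic difference is that the paper takes a maximally separated point configuration for the poles rather than an explicit band-and-grid construction, and it likewise treats the continuum integral $\int\cos^{2k}(\beta/2)\,dA$ only as a heuristic, exactly as you do.
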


We remark that \eqref{eq:D} in the theorem is not optimal, but achieving this non-optimal density condition requires careful work in the proof. 

If we complete the orthonormal set $\{u_i\}_{i=1}^m\subset\SH_k$ to an orthonormal basis, then evidently the density of $\cup_k\{u_i\}_{i=1}^{m}$ is $D$ in this basis since $\dim\SH_k=2k+1$ and $D(2k+1)-1<m\le D(2k+1)$. Thus when $k$ is large enough,
$$\frac{1}{2k+1}\sum_{i=1}^{2k+1}\|u_i\|_p\ge\frac{1}{2k+1}\sum_{i=1}^m\|u_i\|_p\ge\frac{\lfloor D(2k+1)\rfloor}{2k+1}\cdot\frac 12C_pk^{\sigma(p)}\ge\frac D3\cdot C_pk^{\sigma(p)}.$$

We deduce the following result.

\begin{cor}
Using the same notations and constants as in Theorem \ref{thm:main}, there exists an orthonormal basis $\{u_i\}_{i=1}^{2k+1}$ in $\SH_k$ such that
$$\frac{1}{2k+1}\sum_{i=1}^{2k+1}\|u_i\|_p\ge\frac D3\cdot C_pk^{\sigma(p)},\quad k\to\infty,$$
for $2<p\le 6$.
\end{cor}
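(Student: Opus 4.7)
The plan is to obtain the corollary as a direct consequence of Theorem \ref{thm:main} by completing the orthonormal set furnished there to a full orthonormal basis of $\SH_k$ and discarding the contribution of the completion.

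First, I would fix $k\ge k_0$ and apply Theorem \ref{thm:main} to produce an orthonormal set $\{u_i\}_{i=1}^m\subset\SH_k$ with $m=\lfloor D(2k+1)\rfloor$ and $\|u_i\|_p\ge\tfrac12 C_p k^{\sigma(p)}$ for every $i$ and every $2<p\le 6$. Since $\SH_k$ is a finite-dimensional Hilbert subspace of $L^2(\s^2)$ of dimension $2k+1$, I would extend $\{u_i\}_{i=1}^m$ to an orthonormal basis $\{u_i\}_{i=1}^{2k+1}$ of $\SH_k$ by choosing any orthonormal basis of the $L^2$-orthogonal complement of $\mathrm{span}\{u_1,\dots,u_m\}$ inside $\SH_k$ (e.g., via Gram--Schmidt on any spanning set). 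No properties of the newly added vectors $u_{m+1},\dots,u_{2k+1}$ are needed.

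Next, to bound the average, I would discard the extra indices using nonnegativity of the $L^p$ norm:
$$\frac{1}{2k+1}\sum_{i=1}^{2k+1}\|u_i\|_p\ge\frac{1}{2k+1}\sum_{i=1}^m\|u_i\|_p\ge\frac{m}{2k+1}\cdot\frac{1}{2}C_p k^{\sigma(p)}.$$
Finally, I would perform the elementary estimate $m=\lfloor D(2k+1)\rfloor\ge D(2k+1)-1$, so that for $k$ large enough that $D(2k+1)\ge 3$ (equivalently $k\ge(3/D-1)/2$, which with $D=1/400$ means $k\ge 600$), we get $m/(2k+1)\ge 2D/3$, and hence the prefactor $\tfrac{m}{2(2k+1)}\ge D/3$, yielding the claimed bound.

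There is essentially no obstacle here; the corollary is a packaging statement whose only nontrivial input is Theorem \ref{thm:main}. The only minor bookkeeping is to ensure that the threshold $k_0$ from Theorem \ref{thm:main} is enlarged, if necessary, to also satisfy $D(2k+1)\ge 3$ so that the floor loses at most a factor of $2/3$.
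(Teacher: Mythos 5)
Your proof is correct and follows essentially the same route as the paper: complete the orthonormal set from Theorem \ref{thm:main} to a basis of $\SH_k$, discard the added vectors by nonnegativity, and use $\lfloor D(2k+1)\rfloor/(2k+1)\ge 2D/3$ for $k$ large. The only difference is that you spell out the explicit threshold on $k$ needed for the floor bound, which the paper leaves implicit under the phrase ``when $k$ is large enough.''
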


For $p>6$, one can ask a similar question as Problem \ref{pblm:SZ}:
\begin{pblm}\label{pblm:largep}
For which Riemannian surface (if any) does there exist an orthonormal basis of eigenfunctions $\{u_j\}$ for which there exists a positive density subsequence $\{u_{j_m}\}$ so that it achieves the maximal $L^p$ norm growth for some $p>6$?
\end{pblm}

To my knowledge, this is unknown now. Following the argument in this paper, we are able to prove a weaker version:
\begin{cor}\label{cor:largep}
Using the same notations and constants as in Theorem \ref{thm:main}, there exists an orthonormal spherical harmonics set $\{u_i\}_{i=1}^m\subset\SH_k$  with positive density $D$ satisfying \eqref{eq:D} such that
$$\|u_i\|_p\approx k^{\frac14-\frac{1}{2p}},$$
for $p>6$. In particular, $\|u_i\|_\infty\approx k^{1/4}$.
\end{cor}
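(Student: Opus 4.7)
\emph{Proof plan.} The orthonormal family $\{u_i\}_{i=1}^{m}\subset\SH_k$ is exactly the one produced in Theorem~\ref{thm:main}; only the evaluation of $\|u_i\|_p$ in the new range $p>6$ is required. The strategy has two parts: deduce the lower bound from interpolation applied to the $p=6$ case already supplied by Theorem~\ref{thm:main}, and deduce the matching upper bound from an $L^\infty$ estimate obtained by exploiting the beam-like structure of the construction.

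\emph{Lower bound.} Theorem~\ref{thm:main} at $p=6$ furnishes $\|u_i\|_6\ge\tfrac12 C_6\,k^{1/6}$, and by construction $\|u_i\|_2=1$. For $p>6$, write $1/6=(1-\theta)/2+\theta/p$ with $\theta=(1/3)/(1/2-1/p)\in(0,1)$. Log-convexity of $L^q$ norms then gives $\|u_i\|_6\le\|u_i\|_2^{1-\theta}\|u_i\|_p^{\theta}$, and solving yields
$$\|u_i\|_p\ge\|u_i\|_6^{1/\theta}\gtrsim k^{(1/6)(3/2-3/p)}=k^{1/4-1/(2p)}.$$
Letting $p\to\infty$ (using that $u_i$ is a polynomial and that $\s^2$ has finite area) gives the lower half of $\|u_i\|_\infty\approx k^{1/4}$.

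\emph{Upper bound.} For the matching inequality it is enough to prove $\|u_i\|_\infty\lesssim k^{1/4}$, since then interpolation against $\|u_i\|_2=1$ gives, for all $p>6$,
$$\|u_i\|_p\le\|u_i\|_2^{2/p}\|u_i\|_\infty^{1-2/p}\lesssim k^{(1-2/p)/4}=k^{1/4-1/(2p)}.$$
To obtain the $L^\infty$ bound one uses the explicit form of the construction underlying Theorem~\ref{thm:main}: each $u_i=v_i+w_i$, where $v_i$ is one of the rotated Gaussian beams (so $\|v_i\|_\infty\asymp k^{1/4}$) and $w_i$ is the small linear combination of the remaining rotated beams produced by the orthogonalization. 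The decay of the off-diagonal Gram matrix entries $\langle v_i,v_j\rangle$, which condition~\eqref{eq:D} is exactly designed to control in the proof of Theorem~\ref{thm:main}, together with the pointwise Gaussian concentration of each rotated beam on its own band of width $k^{-1/2}$, lets one bound $\|w_i\|_\infty\lesssim k^{1/4}$ by a counting argument enumerating how many bands pass through any given $x\in\s^2$.

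\emph{Main obstacle.} The sole nontrivial step is the $L^\infty$ estimate on $w_i$. Sogge's universal bound applied in $\SH_k$ gives only $\|w_i\|_\infty\lesssim k^{1/2}\|w_i\|_2$, short by a factor of $k^{1/4}$. One must genuinely use that $w_i$ is a controlled superposition of beam-like functions rather than an arbitrary spherical harmonic: a point-by-point count of the beams whose band of concentration meets $x\in\s^2$, combined with Cauchy--Schwarz applied with weights provided by the Gaussian transverse decay, delivers the missing $k^{1/4}$ factor. This is the step where the precise density bound from Theorem~\ref{thm:main} enters in a quantitative way.
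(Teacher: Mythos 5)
Your lower bound is correct and in fact takes a cleaner route than the paper: the paper re-runs the expansion \eqref{eq:up} for $p>6$, which requires first extracting $\|Q_k\|_p\gtrsim k^{1/4-1/(2p)}$ from the Gaussian profile \eqref{eq:Gaussian}, whereas you get the same lower bound for free by interpolating the $p=6$ conclusion of Theorem \ref{thm:main} against $\|u_i\|_2=1$. That part stands on its own.

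The upper bound is where your proposal has a genuine soft spot. You declare the estimate $\|w_i\|_\infty\lesssim k^{1/4}$ (with $w_i=\sum_{j\ne i}F_{ij}q_j$) to be the ``sole nontrivial step,'' to be handled by counting how many beam bands pass through a given point plus Cauchy--Schwarz with Gaussian weights. As sketched this would not close: the poles are only $d\approx(Dk)^{-1/2}$ separated, so roughly $\sqrt{Dk}$ of the bands of width $k^{-1/2}$ pass through a typical point $x$, and an $\ell^2$-type estimate such as $|w_i(x)|\le\big(\sum_{j\ne i}|F_{ij}|^2\big)^{1/2}\big(\sum_{j\ne i}|q_j(x)|^2\big)^{1/2}\lesssim D^{1/4}k^{1/2}$ falls short of $k^{1/4}$ by exactly the factor you are trying to gain. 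What saves the argument is not geometry but the $\ell^1$ control of the off-diagonal entries of $F=E^{-1/2}$ already established in the proof of Theorem \ref{thm:main}: there one has $F_{ii}\le 5/4$ and $R'_i(F)=\sum_{j\ne i}|F_{ij}|\le 1/4$, so $\|u_i\|_\infty\le\sum_j|F_{ij}|\,\|q_j\|_\infty\le\frac32\,\|Q_k\|_\infty\lesssim k^{1/4}$ with no counting at all; more generally $\|u_i\|_p\le\frac32\|Q_k\|_p\approx k^{1/4-1/(2p)}$ for every $p>6$ directly, which is the (implicit) upper-bound half of the paper's argument, the written proof there only recording the lower bound obtained by repeating \eqref{eq:up} with a minus sign. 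So no new quantitative use of the density condition \eqref{eq:D} is needed beyond what Theorem \ref{thm:main} already provides; once you replace your counting step by the row-sum bound on $F$, your proof is complete and matches the intended one.
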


Here, recall that for Gaussian beams, $\|Q_k\|_p\approx k^{1/4-1/(2p)}$ in \eqref{eq:Qpnorm}, comparing to the maximal $L^p$ norm growth for $p>6$ of zonal harmonics $\|Z_k\|_p\approx k^{1/2-2/p}$.

\subsection*{Further remarks and some background}
Since $\|u\|_p\gtrsim\|u\|_2$ for $p>2$ on compact manifolds, it is also interesting to ask whether one can achieve the minimal $L^p$ growth of orthonormal eigenfunctions on $(\M,g)$. Toth and Zelditch \cite{TZ} proved that under certain condition, if all the eigenfunctions have minimal $L^\infty$ norm growth $O(1)$, then the manifold must be a flat torus. See \cite{Z1, Z2} for more information. 

On $\s^2$, VanderKam \cite[Theorem 1.2]{V} proved that for almost all orthonormal eigenfunction bases, the $L^\infty$ norm of eigenfunctions is bounded by $\log^2\lambda$. But it is unknown whether there exists a uniformly bounded orthonormal eigenfunction basis or a uniformly bounded subsequence of eigenfunctions with some density in an orthonormal basis.

On $\s^3$, regarded as the boundary of the unit ball in $\mathbb C^2$, Bourgain \cite{B1} constructed a uniformly bounded orthonormal basis for the Hilbert space of holomorphic polynomials. It is still open whether a bounded basis exists in higher dimensions, i.e. on $\s^{2n-1}\subset\mathbb C^n$ for $n\ge3$, or more generally on the boundary of a relatively compact strictly pseudoconvex domain in a complex manifold. Very recently, Shiffman \cite{Sh} proved a partial result that one can select a positive density sequence of uniformly bounded orthonormal sections of a holomorphic bundle over a compact K\"ahler manifold.

In the opposite direction of Problems \ref{pblm:SZ} and \ref{pblm:largep}, if a manifold $(\M,g)$ supports a positive density subsequence of orthonormal eigenfunctions achieving maximal $L^p$ norm growth for some $p>2$, which is a much stronger condition than $(\M,g)$ having maximal $L^p$ growth, then in this case what can we say about the dynamical properties of its geodesic flow? One can weaken the question by assuming the density is of order $O(1/N)$ (as the density of Gaussian beams \eqref{eq:Qdensity} in the standard spherical harmonics basis). In dimension two, Theorem \ref{thm:SZ} from \cite{SZ2} says that if the measure of periodic geodesic is zero, then there does not exist a positive density subsequence of eigenfunctions achieving maximal $L^p$ norm growth for $2<p<6$. While \eqref{eq:SZ3} from \cite{SZ3} gives a deterministic statement that all eigenfunctions can not achieve maximal $L^p$ norm growth for $2<p<6$ if the curvatures of the manifold are nonpositive. Results for $p>6$ can be found in \cite{SZ1, STZ, TZ} and references therein.\\

After this paper was completed, I was informed about Bourgain's unpublished note: In \cite{B3}, Bourgain proved the existence of a positive density subsequence of spherical harmonics on $\s^2$ with maximal $L^4$ norm growth in Theorem \ref{thm:main}, therefore answers Problem \ref{pblm:SZ}. His result can be generalized to $L^p$ for all $2<p\le6$. However, his approach is different with the present one, and no explicit density bound is given therein.\\

Throughout this paper, $A\lesssim B$ ($A\gtrsim B$) means $A\le cB$ ($A\ge cB$) for some constant $c$ depending only on the sphere; $A\approx B$ means $A\lesssim B$ and $B\lesssim A$; the constants $c$ and $C$ may vary from line to line.

\subsection*{Organization of the paper}
In Section \ref{sec:pre}, we outline the proof of the main theorem, Theorem \ref{thm:main}, and recall the crucial tools used after: diagonally dominant matrices and Wigner D-matrix; in Section \ref{sec:main}, we give the proofs of Theorem \ref{thm:main} and Corollary \ref{cor:largep}; in Section \ref{sec:loc}, we give some characterization on the $L^2$ localization of the spherical harmonics in Theorem \ref{thm:main}.

\section{Outline of the proof and preliminaries}\label{sec:pre}

\subsection*{Outline of the proof}
In $\SH_k$ with dimension $2k+1$, to construct an orthonormal set of spherical harmonics with large $L^p$ ($2<p\le6$) norms, one of course wants to choose Gaussian beams since they are the maximizers. For $0<D<1$ and $m=\lfloor D(2k+1)\rfloor<2k+1$, if we choose $m$ Gaussian beams $\{q_i\}_{i=1}^m$ concentrating on well-separated great circles, they are $L^p$ norm maximizers, however not orthogonal to each other in $L^2(\s^2)$. In order to meet the orthogonality condition, one can use Gram-Schmidt process to orthonormalize them, and then compute how many of the resulting eigenfunctions can still maximize $L^p$ norms. This is in fact the approach suggested in \cite{SZ2}. However, the computation is too complicated, and it is extremely difficult to estimate the $L^p$ norms of the new spherical harmonics after multiple steps in the process\footnote{Bourgain's argument \cite{B3}, however, uses Gram-Schmidt process together with some ``Hilbertian system'' tools to construct a new orthonormal system from a set of Gaussian beams.}.

Instead of using Gram-Schmidt process, where one modifies $\{q_i\}_{i=1}^m$ inductively to make the following one normalized and orthogonal to all the preceding ones, we modify the Gaussian beams $\{q_i\}_{i=1}^m$ all together and find a new set of orthogonal eigenfunctions $\{u_i\}_{i=1}^m$. Each $u_i$ is close to $q_i$ in both $L^2$ and $L^p$ norms for $p>2$. This in turn is reduced to the functional analysis of the corresponding matrices $\big(\langle q_i,q_j\rangle\big)$ and $\big(\langle u_i,u_j\rangle\big)$, and can be done if we choose the density $D$ small enough.

\subsection*{Diagonally dominant matrices}
The main step in proving Theorem \ref{thm:main} to show that the two matrices $\big(\langle q_i,q_j\rangle\big)$ and $\big(\langle u_i,u_j\rangle\big)$ are both strictly diagonally dominant; therefore it is necessary to recall some standard facts about this type of matrix. For more information, see \cite[Chapter 6]{HJ}.

Let $A=(a_{ij})$ be a $n\times n$ complex matrix, i.e. $A\in M_n(\mathbb C)$, $A$ is said to be diagonally dominant if 
$$|a_{ii}|\ge\sum_{j=1,j\ne i}^n|a_{ij}|=R'_i(A)$$
for all $i=1,...,n$. And it is said to be strictly diagonally dominant if all the above inequalities are strict for $i=1,...,n$.
Here, $R_i'(A)$ is called the $i$-th deleted absolute row sums of $A$. The following theorem states that all the eigenvalues of $A$ fall into the ``Ger\v sgorin discs'' involving the diagonal entries and the deleted absolute row sums.

\begin{lemma}[Ger\v sgorin disc theorem]\label{thm:G}
All the eigenvalues of $A$ are located in the union of $n$ Ger\v sgorin discs:
$$\bigcup_{i=1}^n\big\{z\in\mathbb C:|z-a_{ii}|\le R'_i(A)\big\}.$$
\end{lemma}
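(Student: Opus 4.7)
The plan is to use the standard extremal-eigenvector-coordinate argument, which is the quickest route to Ger\v sgorin's theorem. Given an eigenvalue $\lambda$ with eigenvector $x=(x_1,\ldots,x_n)^T\ne 0$, my strategy is to locate $\lambda$ inside the $i$-th Ger\v sgorin disc, where $i$ is the index at which $|x_i|=\max_{1\le j\le n}|x_j|$. The point of choosing that extremal coordinate is that once I isolate the diagonal entry in the $i$-th scalar equation of $Ax=\lambda x$, every off-diagonal ratio $|x_j|/|x_i|$ is automatically at most $1$, which is exactly what converts the raw off-diagonal row sum into the deleted absolute row sum $R'_i(A)$.

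Concretely, I would first fix $\lambda$ and $x$ and select such an $i$, noting $|x_i|>0$ since $x\ne 0$. Next I extract the $i$-th component of $Ax=\lambda x$, namely $\sum_j a_{ij}x_j=\lambda x_i$, and rearrange it as $(\lambda-a_{ii})x_i=\sum_{j\ne i}a_{ij}x_j$. Taking absolute values and applying the triangle inequality together with $|x_j|\le|x_i|$ gives $|\lambda-a_{ii}||x_i|\le |x_i|\sum_{j\ne i}|a_{ij}|=|x_i|R'_i(A)$. Dividing through by $|x_i|>0$ then yields $|\lambda-a_{ii}|\le R'_i(A)$, placing $\lambda$ in the $i$-th disc, and hence in the union displayed in the lemma.

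There is no genuine obstacle here; the argument is two lines once the extremal index is fixed. The only subtlety worth flagging is that the chosen index $i$ depends on the eigenvector $x$, and hence on $\lambda$, so distinct eigenvalues may land in different discs; this is precisely why the conclusion is stated as a union over $i=1,\ldots,n$ rather than membership in any single disc. Everything else is just the triangle inequality.
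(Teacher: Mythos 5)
Your proof is correct and is the standard extremal-coordinate argument; the paper does not prove the lemma itself but refers to Horn--Johnson (Theorem 6.1.1), where essentially this same argument is given. Nothing to add.
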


One can regard $A$ as a perturbation of the diagonal matrix $\diag(a_{11},...,a_{nn})$ by adding $R'_i(A)$ to the $i$-th row, then Lemma \ref{thm:G} concludes that the eigenvalues of $A$ are in the neighborhoods of $a_{11},...,a_{nn}$. See \cite[Theorem 6.1.1]{HJ} for a detailed proof. 

\subsection*{Inner product of two Gaussian beams}

Recall that the two Gaussian beams $Q_{\pm k}$ defined in the introduction are
$$Q_{\pm k}(\phi,\theta)=C_kP_k^k(\cos\phi)e^{\pm ik\theta}.$$

Both of them concentrate around the equator $G=\{\phi=\pi/2\}$, the great circle perpendicular to the north pole $\phi=0$, and decrease in $\phi$ exponentially away from $G$. However, they propagate in opposite directions on $G$; by the right-hand rule, we say that $Q_k$ has pole as the north pole and propagates in the positive direction on $G$. So in our terminology, $Q_{-k}$ has pole $\phi=\pi$ as the south pole. Any other Gaussian beam with pole as the north pole differs with $Q_k$ only by a phase shift, and has the form
$$Q_k^\alpha(\phi,\theta)=C_kP_k^k(\cos\phi)e^{ik(\theta+\alpha)},$$
where $\alpha\in[-\pi/k,\pi/k)$, and $ik\alpha$ is the initial phase. This is equivalent to the statement that
$$Q_k^\alpha=e^{ik\alpha}Q_k$$
on $\s^2$.

Given two Gaussian beams $q_1$ and $q_2$ with respect to poles $x_1$ and $x_2$ and concentrating around great circles $G_1$ and $G_2$, we need to compute their inner product
$$\langle q_1,q_2\rangle=\int_{\s^2}q_1\overline q_2.$$

\begin{ex} Using the above notation, it is easy to see that for two Gaussian beams $Q_k$ and $Q_k^\alpha$ with the same pole as the north pole,
$$\langle Q_k^\alpha,Q_k\rangle=e^{ik\alpha}\langle Q_k,Q_k\rangle=e^{ik\alpha},$$
which is exactly the phase shift.
\end{ex}

Generally, we have the following lemma.

\begin{lemma}\label{thm:W} Admitting the above notations about $q_1$ and $q_2$,
$$\langle q_1,q_2\rangle=e^{ik\alpha}\left(\cos\frac{\beta}{2}\right)^{2k},$$ 
in which $0\le\beta\le\pi$ is the angle between the poles $x_1$ and $x_2$, and $e^{ik\alpha}$ is the phase shift at the intersecting points of $G_1$ and $G_2$. That is,
$$e^{ik\alpha}=\frac{q_1(y_1)}{q_2(y_1)}=\frac{q_1(y_2)}{q_2(y_2)}$$
where $\alpha\in[-\pi/k,\pi/k)$, and $y_1$ and $y_2$ are the two intersecting points of $G_1$ and $G_2$. If the poles $x_1$ and $x_2$ coincide, we can choose any non-vanishing point on $\s^2$ to get the phase shift.
\end{lemma}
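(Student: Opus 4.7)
My plan is to represent each Gaussian beam as a harmonic polynomial on $\R^3$ restricted to $\s^2$: the Gaussian beam with pole $p$ and reference frame $(e_1,e_2)$ of $p^\perp$ is $C_k(x\cdot v)^k$, where $v = e_1 + ie_2 \in \mathbb{C}^3$. The relation $v\cdot v = 0$ is equivalent to harmonicity of $(x\cdot v)^k$, and $|v|^2 = 2$ fixes the scale; rotating the frame by angle $\gamma$ sends $v\mapsto e^{-i\gamma}v$ and hence $q\mapsto e^{-ik\gamma}q$, which encodes the phase freedom described just before the lemma.

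The main computation is then direct. By rotation-invariance of the inner product I will place the pole of $q_1$ at the north pole, so $v_1 = (1,i,0)$ and $q_1 = C_k\sin^k\phi\, e^{ik\theta}$. The angular integral then picks out only the $e^{-ik\theta}$ Fourier coefficient of $(x\cdot\bar v_2)^k$; a short binomial expansion in spherical coordinates identifies that coefficient as $\sin^k\phi\cdot\bigl((v_1\cdot\bar v_2)/2\bigr)^k$. Combined with the radial integral and the $L^2$ normalization $|C_k|^2\cdot 2\pi\int_0^\pi \sin^{2k+1}\phi\,d\phi = \|q_1\|_2^2 = 1$, this yields
\[\langle q_1, q_2\rangle = \left(\frac{v_1\cdot\bar v_2}{2}\right)^k.\]

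The geometric interpretation follows in two steps. For the modulus, $|v_1\cdot\bar v_2|$ is invariant under changes of the frame of $q_2$, so I choose $e_{2,1}$ in the plane spanned by $x_1,x_2$ and compute directly that $v_1\cdot\bar v_2 = 1+\cos\beta = 2\cos^2(\beta/2)$. For the phase, an additional frame rotation of $q_2$ by $\gamma$ multiplies both $(v_1\cdot\bar v_2)/2$ and the ratio $q_1(y)/q_2(y)$ at an intersection point $y = \pm(x_1\times x_2)/|x_1\times x_2|$ by $e^{ik\gamma}$; in the symmetric baseline both beams equal $C_k i^k$ at $y_+$, so that ratio is $1$ and matches the inner product. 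The two intersection points give the same ratio because $q_j(-y) = (-1)^k q_j(y)$ cancels in the quotient.

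The main obstacle is convention-tracking in the phase step: the right-hand rule for propagation, the choice between $y_+$ and $y_-$, and the sign conventions linking $v\mapsto e^{-i\gamma}v$, $q\mapsto e^{-ik\gamma}q$, and the inner-product factor $e^{ik\gamma}$ all have to be consistently aligned. As a cross-check, the formula can be rederived through the Wigner D-matrix mentioned in the preliminaries: since $q_1 = Y_k^k$ in its own frame, $\langle q_1,q_2\rangle$ is the $(k,k)$-entry of $\rho(R)$ acting on $\SH_k$ for the rotation $R$ aligning the two poles, and the classical identities $d_{kk}^k(\beta) = \cos^{2k}(\beta/2)$ together with $D_{kk}^k(\alpha,\beta,\gamma) = e^{-ik\alpha}d_{kk}^k(\beta)e^{-ik\gamma}$ recover both the modulus and the phase at once.
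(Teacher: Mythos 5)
Your proposal is correct, and it proves the lemma by a genuinely different route from the paper. The paper's proof stays inside the rotation-operator framework set up in Section 2: it identifies the explicit Euler-angle rotation with $P_{(-\alpha)(-\beta)0}q_1=q_2$ (by matching phases at the intersection point $y_1$, which is fixed by $R_{0(-\beta)0}$), and then quotes Wigner's formula $D^k_{kk}(\alpha,\beta,\gamma)=e^{ik\alpha}\left(\cos\frac{\beta}{2}\right)^{2k}e^{ik\gamma}$ for the diagonal entry of the D-matrix, so the nontrivial analytic content is imported from \cite{W}. You instead parametrize every Gaussian beam as $C_k(x\cdot v)^k$ with a null vector $v=e_1+ie_2$, $v\cdot v=0$, and compute $\langle q_1,q_2\rangle=\left(\frac{v_1\cdot\bar v_2}{2}\right)^k$ directly: with $v_1=(1,i,0)$ the $\theta$-integral forces the $e^{-ik\theta}$ coefficient of $(x\cdot\bar v_2)^k$, which by the multinomial expansion is $\sin^k\phi\left(\frac{v_1\cdot\bar v_2}{2}\right)^k$, and the remaining $\phi$-integral is absorbed by the normalization; taking $e_{2,1}$ in the plane of the poles gives $v_1\cdot\bar v_2=1+\cos\beta=2\cos^2\frac{\beta}{2}$, and the phase identification follows because a frame rotation of either beam multiplies the inner product and the ratio $q_1(y_\pm)/q_2(y_\pm)$ by the same unit factor, with both equal to $1$ in the symmetric baseline where $q_1(y_+)=q_2(y_+)=C_ki^k$ (and $q_j(-y)=(-1)^kq_j(y)$ makes the two intersection points agree). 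The checks I verified all go through, including the degenerate cases $\beta=0$ (where $v_1\cdot\bar v_2=2e^{i\gamma}$ reproduces the pure phase shift) and $\beta=\pi$ (both sides vanish). What each approach buys: yours is self-contained and elementary, replacing the citation of Equation (15.27a) by a short Fourier/binomial computation, and the closed formula $\left(\frac{v_1\cdot\bar v_2}{2}\right)^k$ makes modulus and phase simultaneously transparent; the paper's argument is shorter once the D-matrix entry is granted and fits the rotation-operator machinery it has already introduced — indeed your Wigner cross-check at the end is essentially the paper's proof. The only caution in your write-up is the one you flag yourself: the sign conventions relating $v\mapsto e^{-i\gamma}v$, $q\mapsto e^{-ik\gamma}q$, the right-hand-rule orientation of the frame $(e_1,e_2,p)$, and the choice of $y_+=(x_1\times x_2)/|x_1\times x_2|$ must be fixed consistently, but since you verify the baseline case by direct evaluation and only use equivariance afterwards, no actual gap results.
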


Let us fix the cartesian coordinates $(x^{(1)},x^{(2)},x^{(3)})$ and the corresponding polar coordinates $(\phi,\theta)$ for $x\in\s^2$:
$$\begin{cases}
x^{(1)}=\sin\phi\cos\theta,&\\
x^{(2)}=\sin\phi\sin\theta,&\\
x^{(3)}=\cos\phi.
\end{cases}$$
 
For each orthogonal matrix $R$ in $SO(3)$, the special orthogonal group in $\R^3$, it can be specified by its Euler angles $(\alpha,\beta,\gamma)$, written as $R_{\alpha\beta\gamma}$. Then $R_{\alpha\beta\gamma}$ is associated with the operator $P_{\alpha\beta\gamma}$ of 
\begin{enumerate}[(a).]
\item rotating about $x^{(3)}$-axis by an angle $\alpha$,
\item rotating about $x^{(2)}$-axis by an angle $\beta$, 
\item rotating about $x^{(1)}$-axis by an angle $\gamma$:
\end{enumerate} 
$$P_{\alpha\beta\gamma}f(y)=f(R_{\alpha\beta\gamma}y),\quad y\in\s^2,$$
for a function $f$ on the sphere. The axes remain fixed in these rotations. Here, we follow \cite[Appendix A]{W} for the convention of orientation by the right-hand rule. 

Recall that from the introduction, the standard orthonormal basis in $\SH_k$ consists of 
$$Y_m^k(\phi,\theta)=C_{k,m}P_k^m(\cos\phi)e^{im\theta},\quad m=-k,...,k.$$

For example, $R_{\alpha00}$ is the rotation about $x^{(3)}$-axis through an angle $\alpha$, then in polar coordinates $(\phi,\theta)$, $\phi$ is unchanged, and $\theta$ goes into $\theta+\alpha$. Thus
\begin{equation}\label{eq:alpha}
P_{\alpha00}Y^k_m(\phi,\theta)=e^{im\alpha}Y^k_m(\phi,\theta),
\end{equation}
and if $\alpha=2l\pi/m$, $l\in\mathbb N$, then $P_{\alpha00}$ keeps $Y_m^k$ unchanged. This is indeed the fact that $Y^k_m$ is a joint eigenfunction of the Laplace-Beltrami operator $\Delta$ and $L_3=i\partial_\theta$, the generator of rotations about $x^{(3)}$-axis. See \cite[\S16]{Z2} for a detailed discussion.

Generally, $P_{\alpha\beta\gamma}Y^k_m$, as a new function on $\s^2$, is still an eigenfunction of $-\Delta$ since $\Delta$ is invariant under rotations, (and may not be an eigenfunction of $L_3$ since it is not rotational invariant). Therefore, $P_{\alpha\beta\gamma}Y^k_m\in\SH_k$, and can be expanded by the standard spherical harmonics basis:
$$P_{\alpha\beta\gamma}Y^k_m=\sum_{m'=-k}^kD^k_{m'm}(\alpha,\beta,\gamma)Y_{m'}^k,$$
where 
$$D^k_{m'm}(\alpha,\beta,\gamma)=\langle P_{\alpha\beta\gamma}Y^k_m,Y_{m'}^k\rangle$$
since $Y_{m'}^k$'s are orthonormal. The $(2k+1)\times(2k+1)$ matrix with entries $D^k_{m'm}(\alpha,\beta,\gamma)$ is called a Wigner D-matrix. See \cite[Chapter 15]{W} for a complete treatment on this subject. In particular, Equation (15.27a) in \cite{W} is
\begin{equation}\label{eq:Dmatrix}
\langle P_{\alpha\beta\gamma}Y^k_m,Y_m^k\rangle=D^k_{mm}(\alpha,\beta,\gamma)=e^{im\alpha}\left(\cos\frac{\beta}{2}\right)^{2k}e^{im\gamma}.
\end{equation}

Now we proceed to prove Lemma \ref{thm:W} using the above tools.

\begin{proof}[Proof of Lemma \ref{thm:W}]
Because we only care about two Gaussian beams $q_1$ and $q_2$ with poles $x_1$ and $x_2$, we can assume $x_1$ is the north pole, and $x_2$ is on the $x^{(1)}x^{(3)}$-plane with polar coordinate $(\beta,0)$, $0<\beta\le\pi$. When $\beta=0$, the two poles coincide, and this case has been discussed in the beginning of this subsection. 

We aim to find a rotation $R$ such that the operator $P$ associated with $R$ satisfies
$$Pq_1=q_2.$$

Denote the two great circles perpendicular to $x_1$ and $x_2$ as $G_1$ and $G_2$. Then they intersect at two points on $x^{(2)}$-axis:
$$y_1=\left(\frac{\pi}{2},\frac{\pi}{2}\right)\quad\text{and}\quad y_2=\left(\frac{\pi}{2},\frac{3\pi}{2}\right).$$

We know that $|q_1|$ achieves its maximal value $C_kP_k^k(0)$ on $G_1$, and $|q_2|$ achieves the same maximal value on $G_2$. Therefore, at $y_1$ and $y_2$, 
$$\left|q_1(y_1)\right|=\left|q_1(y_2)\right|=\left|q_2(y_1)\right|=\left|q_1(y_2)\right|=C_kP_k^k(0).$$

Then there is a complex constant $e^{ik\alpha}$ for $\alpha\in[-\pi/k,\pi/k)$ such that
\begin{equation}\label{eq:q}
\frac{q_1(y_1)}{q_2(y_1)}=e^{ik\alpha}.
\end{equation}

We can also conclude that
$$\frac{q_1(y_2)}{q_2(y_2)}=\frac{e^{ik\pi}q_1(y_1)}{e^{ik\pi}q_2(y_1)}=e^{ik\alpha},$$
which just states that the phase changing from $y_1=(\pi/2,\pi/2)$ to $y_2=(\pi/2,3\pi/2)$ is $k\pi$ for both $q_1$ and $q_2$.

Now observe that from \eqref{eq:alpha},
$$P_{(-\alpha)(-\beta)0}q_1=P_{0(-\beta)0}P_{(-\alpha)00}q_1=e^{-ik\alpha}P_{0(-\beta)0}q_1$$
is also a Gaussian beam with pole $x_2$. Furthermore, because $y_1$ is invariant under the rotation $R_{0(-\beta)0}$ as it is on $x^{(2)}$-axis: $R_{0(-\beta)0}y_1=y_1$,
$$P_{(-\alpha)(-\beta)0}q_1(y_1)=e^{-ik\alpha}P_{0(-\beta)0}q_1(y_1)=e^{-ik\alpha}q_1\big(R_{0(-\beta)0}y_1\big)=e^{-ik\alpha}q_1(y_1)=q_2(y_1)$$
from \eqref{eq:q}. Then 
$$P_{(-\alpha)(-\beta)0}q_1=q_2$$
must hold from the fact that $P_{(-\alpha)(-\beta)0}q_1$ and $q_2$ are two Gaussian beams with the same pole and have the same phase at one point $y_1$. This implies
$$\langle q_1,q_2\rangle=\overline{\langle q_2,q_1\rangle}=\overline{\left\langle P_{(-\alpha)(-\beta)0}q_1,q_1\right\rangle}=\overline{e^{-ik\alpha}D^k_{kk}(0,\beta,0)}=e^{ik\alpha}\left(\cos\frac{\beta}{2}\right)^{2k}$$
by \eqref{eq:Dmatrix}.
\end{proof}

\begin{rmk}
In the next section, we shall select a family of Gaussian beams $\{q_i\}_{i=1}^m$ on the sphere with arbitrary phase shifts at the intersecting points. It is crucial to observe that $|\langle q_i,q_j\rangle|$ depends only on the angle between the corresponding poles.
\end{rmk}

\section{Proofs of Theorem \ref{thm:main} and Corollary \ref{cor:largep}}\label{sec:main}

Now we begin the proof of Theorem \ref{thm:main}. Follow the notations in the previous section: For $0<D<1$ to be chosen later, let $m=\lfloor D(2k+1)\rfloor<2k+1$. We choose $m$ Gaussian beams $\{q_i\}_{i=1}^m$ corresponding to $m$ roughly evenly separated poles in the following way.

Select the set of $m$ roughly evenly separated points $\{x_i\}_{i=1}^m$ on $\s^2$ with largest distance between nearby pairs, then the distance between two nearby points $d$ satisfy
$$\frac{4\pi}{\pi(2d)^2}\le m=\lfloor D(2k+1)\rfloor\le\frac{4\pi}{\pi(d/3)^2},$$
because every disc on $\s^2$ with radius $2d$ contains at least one point, and every disc with radius $d/3$ contains at most one point. Therefore, 
\begin{equation}\label{eq:d}
\frac{1}{\sqrt{\lfloor D(2k+1)\rfloor}}\le d\le\frac{6}{\sqrt{\lfloor D(2k+1)\rfloor}}.
\end{equation}

For each point $x_i$, let $q_i$ be a Gaussian beam with the pole $x_i$. (Recall the terminology of Gaussian beams and their corresponding poles in Section \ref{sec:pre}.) From Lemma \ref{thm:W},
$$|\langle q_i,q_j\rangle|=\left(\cos\frac{\beta_{ij}}{2}\right)^{2k},$$
in which $\beta_{ij}$ is the angle between the poles $x_i$ and $x_j$ ($0\le\beta_{ij}\le\pi$). Denote the $n\times n$ Hermitian matrix
$$E=\big(\langle q_i,q_j\rangle\big).$$ 

We show that if the density $D$ is chosen small enough, then $E$ is strictly diagonally dominant. In order to evaluate the deleted absolute row sums
$$R'_i(E)=\sum_{j=1,j\ne i}^m|\langle q_i,q_j\rangle|,$$
we assume $i=1$ and $x_1$ is the north pole without loss of generality. Using the coordinates $(\phi,\theta)$, we divide $\phi\in[0,\pi]$ into $\lceil\pi/d\rceil$ intervals with equal length 
$$\delta=\frac{\pi}{\lceil\pi/d\rceil}$$
where $\lceil\pi/d\rceil$ is the ceiling function, that is, the smallest integer not less than $\lceil\pi/d\rceil$: 
$$\pi/d\le\lceil\pi/d\rceil<\pi/d+1.$$

Then $d/2\le\delta\le d$, and by \eqref{eq:d}
\begin{equation}\label{eq:delta}
\frac{1}{2\sqrt{\lfloor D(2k+1)\rfloor}}\le\delta\le\frac{6}{\sqrt{\lfloor D(2k+1)\rfloor}}.
\end{equation}

Write the partition
$$\mathcal P:0<\delta<2\delta<\cdots<\lceil\pi/d\rceil\delta=\pi,$$
then $\mathcal P$ provides $\lceil\pi/d\rceil$ strips $S_l$ with $l=1,...,\lceil\pi/d\rceil$ on the sphere, which can be divided into three groups:
\begin{enumerate}[(I).]
\item $l=1$: The number of poles in $S_1$ is bounded by a constant since $\delta\le d$. They correspond to the Gaussian beams which have the largest inner product with $q_1$. 

\item $l$ is small: The strips are close to the north pole $x_1$, and they contain less poles comparing the ones with large $l$. However, the poles have relatively small angles with $x_1$, and the corresponding Gaussian beams have relatively large inner product with $q_1$. The summation in this group can be carefully examined, and then controlled if we increase the length of intervals $\delta$, which subsequently results to decreasing the density $D$.

\item $l$ is large: The strips are away from the north pole $x_1$. Even they contain more poles with number of them growing polynomially, the inner product with $q_1$ decrease exponentially when $k\to\infty$. Therefore, the summation in this group can always be well controlled, and in fact is independent of $\delta$ and $D$ if the strips are away from the north pole by a constant angle.
\end{enumerate}

Since $\cos^{2k}$ decreases exponentially away from $1$, the summations in Groups (I) and (II) above essentially contributes the majority of $R'_1(E)$. We divide $\mathcal P$ into three groups: $l=1$, $l\delta\le1/2$, and $l\delta>1/2$ according to the relation
\begin{equation}\label{eq:cos}
\cos(\beta)=1-\frac{\beta^2}{2}+O(\beta^4)\le1-\frac{\beta^2}{3},
\end{equation}
if $0\le\beta\le1/2$.

On the $l$-th strip $S_l$, the poles $x_j$ falling into the strip $S_l$ have angles with the north pole between $(l-1)\delta$ and $l\delta$. To estimate the number of poles in $S_l$, we have
\begin{itemize}
\item The number of poles in $S_1$ is bounded by 
$$\frac{2\pi\sin\delta}{d}\le7.$$

\item The surface area of $S_l$ ($l\ge2$) is
$$\int_{(l-1)\delta}^{l\delta}2\pi\sin\phi\,d\phi=4\pi\sin\frac\delta2\sin\frac{(2l-1)\delta}{2},$$
and the number of poles in $S_l$ is bounded by 
$$\frac{4\pi\sin\frac\delta2\sin\frac{(2l-1)\delta}{2}}{\pi(d/3)^2}\le\frac{18\sin\frac{(2l-1)\delta}{2}}{\delta}\le\frac{36\sin(l-1)\delta}{\delta}.$$
\end{itemize}

Compute that
\begin{eqnarray*}
&&R'_1(E)\\
&=&\sum_{j=2}^m|\langle q_1,q_j\rangle|=\sum_{l=1}^{\lceil\pi/d\rceil}\sum_{x_j\in S_l}|\langle q_1,q_j\rangle|\\
&\le&7\max_{x_j\in S_1}|\langle q_1,q_j\rangle|+\sum_{l=2}^{\lceil\pi/d\rceil}\frac{36\sin(l-1)\delta}{\delta}\,\max_{x_j\in S_l}|\langle q_1,q_j\rangle|\\
&\le&7\left(\cos\frac\delta2\right)^{2k}+36\sum_{l=2,l\delta\le1/2}(l-1)\left(\cos\frac{(l-1)\delta}{2}\right)^{2k}+36\sum_{l=2,l\delta>1/2}(l-1)\left(\cos\frac{(l-1)\delta}{2}\right)^{2k}.
\end{eqnarray*}

Note that the angle between $x_j$ and $x_1$, $\beta_{ij}\ge d\ge\delta$, and we use this estimate for the poles in $S_1$. Then
\begin{enumerate}[(I).]
\item $l=1$: The number of poles in $S_1$ is bounded by $7$. Using \eqref{eq:delta} and \eqref{eq:cos},
\begin{eqnarray*}
7\left(\cos\frac\delta2\right)^{2k}&\le&7\exp\left[2k\log\left(1-\frac{\delta^2}{12}\right)\right]\\
&\le&7\exp\left[-\frac{k\delta^2}{6}\right]\\
&\le&7\exp\left[-\frac{k}{24\lfloor D(2k+1)\rfloor}\right]\\
&\le&7\exp\left[-\frac{1}{72D}\right]\\
&=&7c_0^{-1/D}\to0,\quad\text{as}\quad D\to0,
\end{eqnarray*}
with an absolute constant $c_0=e^{1/72}>1$.

\item $l\ge2$ and $l\delta\le1/2$: Similarly as in (I),
\begin{eqnarray*}
&&36\sum_{l=2,l\delta\le1/2}(l-1)\left(\cos\frac{(l-1)\delta}{2}\right)^{2k}\\
&\le&36\sum_{l=2,l\delta\le1/2}(l-1)\exp\left[-\frac{(l-1)^2}{72D}\right]\\
&\le&36\sum_{l=1}^\infty l\cdot\exp\left[-\frac{l^2}{72D}\right]\\
&\le&36\int_1^\infty t\cdot\left[\exp\left(\frac{1}{72D}\right)\right]^{-t^2}\,dt\\
&=&1296Dc_0^{-1/D}\to0,\quad\text{as}\quad D\to0.
\end{eqnarray*}

\item $l\ge2$ and $l\delta>1/2$: We have $(l-1)\delta>1/4$, then
\begin{eqnarray*}
&&36\sum_{l=2,l\delta>1/2}(l-1)\left(\cos\frac{(l-1)\delta}{2}\right)^{2k}\\
&\le&36\left(\cos\frac18\right)^{2k}\sum_{l=2,l\delta>1/2}(l-1)\\
&\le&36\left(\cos\frac18\right)^{2k}(2k+1)^2\to0,\quad\text{as}\quad k\to\infty,
\end{eqnarray*}
which is independent of $\delta$ and $D$.
\end{enumerate}

\begin{rmk}
Comparing (I) and (II),
$$7c_0^{-1/D}\ge1296Dc_0^{-1/D}$$
if and only if
$$D\le\frac{7}{1296},$$
and this is the case when the number of the poles $m=\lfloor D(2k+1)\rfloor$ is small on the sphere. The majority of the contribution to $\sum_j|\langle q_1,q_j\rangle|$ just comes from the poles around $x_1$ in the first strip $S_1$. Away from $S_1$, the contribution to the summation is essentially negligible.
\end{rmk}

Adding the above quantities in (I), (II), and (III) together, we arrive at
\begin{equation}\label{eq:r}
R'_1(E)\le 7c_0^{-1/D}+1296Dc_0^{-1/D}+o(k)=(7+1296D)c_0^{-1/D}+o(k):=r.
\end{equation}

Thus we deduce that $R'_i(E)\le r<a_{ii}=1$ if $r<1$, and $E$ is strictly diagonally dominant. From Ger\v sgorin disc theorem in Lemma \ref{thm:G}, all the eigenvalues of $E$ fall into the interval $[1-r,1+r]$. (Here, we use the fact that $E$ is Hermitian, and its eigenvalues are real.) In particular, $E$ is positive definite. Therefore, there is a unique positive definite Hermitian matrix $F=E^{-1/2}$. See e.g. \cite[Theorem 7.2.6]{HJ}.

We next show that $F$ is also strictly diagonally dominant by letting $r$ be smaller. Define the matrix norm $\VERT A\VERT$ of $A$ by its $l^\infty\to l^\infty$ mapping
$$A:\R^n\to\R^n:\VERT A\VERT =\sup_{y\in\R^n}\frac{\|Ay\|_{l^\infty}}{\|y\|_{l^\infty}}.$$

Then
$$\VERT A\VERT =\max_i\sum_{j=1}^m|a_{ij}|.$$

Notice that $E=I+B$, where $I$ is the identity matrix and $B$ is a matrix with diagonal entries all zero. Then $\VERT B\VERT =\max_iR'_i(E)=r<1$, and therefore we can expand $F=E^{-1/2}$ by power series:
$$F=(I+B)^{-\frac12}=I+\sum_{i=1}^\infty\binom{-\frac12}{i}B^i=I+H,$$
in which 
$$\VERT H\VERT \le\sum_{i=1}^\infty\binom{-\frac12}{i}\VERT B\VERT ^i\le3\sum_{i=1}^\infty r^i\le6r.$$

Thus,
\begin{equation}\label{eq:FRin6r}
1-6r\le F_{ii}=1+H_{ii}\le1+6r,\quad\text{and}\quad R'_i(F)=R'_i(H)\le\VERT H\VERT \le6r.
\end{equation}

This shows that $F=I+H$ is strictly diagonally dominant if we choose $6r\le1/4$, that is, $r\le1/24$. Then
\begin{equation}\label{eq:FRin6r1/4}
\frac34\le F_{ii}\le\frac54,\quad\text{and}\quad R'_i(F)\le\frac14.
\end{equation}

This requires that by \eqref{eq:r},
$$r=(7+1296D)c_0^{-1/D}+o(k)\le\frac{1}{24},$$
and thus,
$$(7+1296D)c_0^{-1/D}\le\frac{1}{25},$$
if $k$ is large enough.

Let $u=Fq$ with $u=(u_1,...,u_n)^t$ and $q=(q_1,...,q_n)^t$, then it is easy to check that $\{u_i\}_{i=1}^m$ is orthogonal:
$$\langle u_i,u_j\rangle=\left\langle\sum_sF_{is}q_s,\sum_tF_{jt}q_t\right\rangle=\sum_{s,t}F_{is}F_{jt}E_{st}=\big(FEF\big)_{ij}=I_{ij}.$$

Furthermore, by \eqref{eq:FRin6r1/4},
\begin{eqnarray}\label{eq:up}
\|u_i\|_p=\left\|\sum_{j=1}^mF_{ij}q_j\right\|_p&\ge&\|F_{ii}q_i\|_p-\sum_{j=1,j\ne i}^m\|F_{ij}q_j\|_p\nonumber\\
&\ge&|F_{ii}|\|q_i\|_p-R'_i(F)\|q_j\|_p\ge\frac12C_pk^{\sigma(p)}
\end{eqnarray}
for $2<p\le6$. This finishes the theorem.

\begin{rmk}[Another approach to evaluate $R'_1(E)$]
The above computation of $R'_1(E)$ is rather tedious. Here, we provide the heuristics of a much simpler estimate, which is also the original evidence for me to believe $R'_1(E)\to0$ as $D\to0$. Compute that by \eqref{eq:delta},
\begin{eqnarray*}
&&R'_1(E)\\
&=&\sum_{l=1}^{\lceil\pi/d\rceil}\sum_{x_j\in S_l}|\langle q_1,q_j\rangle|\\
&\le&7\max_{x_j\in S_1}|\langle q_1,q_j\rangle|+\sum_{l=2}^{\lceil\pi/d\rceil}\frac{36\sin(l-1)\delta}{\delta}\,\max_{x_j\in S_l}|\langle q_1,q_j\rangle|\\
&\le&\frac{36}{\delta^2}\left[\sin\delta\left(\cos\frac\delta2\right)^{2k}\cdot\delta+\sum_{l=2}^{\lceil\pi/d\rceil}\sin(l-1)\delta\,\left(\cos\frac{(l-1)\delta}{2}\right)^{2k}\cdot\delta\right]\\
&\le&144\lfloor D(2k+1)\rfloor\cdot\left[\sin\delta\left(\cos\frac\delta2\right)^{2k}\cdot\delta+\sum_{l=2}^{\lceil\pi/d\rceil}\sin(l-1)\delta\,\left(\cos\frac{(l-1)\delta}{2}\right)^{2k}\cdot\delta\right].
\end{eqnarray*}

One observes that in the bracket, 
\begin{equation}\label{eq:Riemann}
\sin\delta\left(\cos\frac\delta2\right)^{2k}\cdot\delta+\sum_{l=2}^{\lceil\pi/d\rceil}\sin(l-1)\delta\,\left(\cos\frac{(l-1)\delta}{2}\right)^{2k}\cdot\delta
\end{equation}
is indeed a Riemann sum of the function 
$$\sin\phi\left(\cos\frac\phi2\right)^{2k}$$
with respect to the partition $\mathcal P$ in $[0,\pi]$, where in the first interval $[0,\delta]$, we choose the value of the function at the right endpoint $\phi=\delta$; and in other intervals, we choose the value at the left endpoint $\phi=(l-1)\delta$. Thus one can expect that when $k\to\infty$, the summation tends to the integral
\begin{equation}\label{eq:integral}
\int_0^\pi\sin\phi\left(\cos\frac\phi2\right)^{2k}\,d\phi=\frac{2}{k+1}.
\end{equation}

Therefore, back to the summation,
$$R'_1(E)\le\frac{288\lfloor D(2k+1)\rfloor}{k+1}\le576D\to0\quad\text{as}\quad D\to0,$$
which is a weaker statement than \eqref{eq:r} since it decreases polynomially with $D\to0$, while \eqref{eq:r} goes to zero exponentially.

However, to rigorously justify that the Riemann sum \eqref{eq:Riemann} converges to the integral \eqref{eq:integral} as $k\to\infty$ is not trivial, since both the partition $\mathcal P$ and the integrating function involve $k$.
\end{rmk}

Next we give a simple proof of Corollary \ref{cor:largep}. 

\begin{proof}[Proof of Corollary \ref{cor:largep}]

From \cite[\S16]{Z2},
\begin{equation}\label{eq:Gaussian}
|Q_k(\phi,\theta)|=|Ck^\frac14(\sin\phi)^ke^{ik\theta}|\approx k^\frac14e^{-\frac k2w^2},
\end{equation}
in which $w=\pi/2-\phi$ and is small. Therefore, the mass of $Q_k$ concentrates in a tube $G_1^w$ around the equator with width $w\sim k^{-1/2}$ and have Gaussian decay in the transverse direction. Furthermore, one can also see that if $w\approx k^{-1/2}$, then $q_1\gtrsim k^{1/4}$. Thus for $p>6$,
$$\|Q_k\|_{L^p(\s^2)}\ge\|q_1\|_{L^p(G_1^w)}\gtrsim k^{\frac14-\frac{1}{2p}}$$
justifying \eqref{eq:Qpnorm}.

Repeat the computation in \eqref{eq:up}, we have
$$\|u_i\|_p\gtrsim k^{\frac14-\frac{1}{2p}}$$
for $p>6$. In particular, $\|u_i\|_\infty\gtrsim k^{1/4}$.
\end{proof}

\begin{rmk}
Recall that the $L^p$ norm maximizers for $p>6$, the zonal harmonics $Z_k$, have
$$\|Z_k\|_p=\Omega\left(k^{\frac12-\frac2p}\right),$$
from \eqref{eq:Zpnorm}. Therefore, the result we have for $p>6$ in Corollary \ref{cor:largep} is weaker than Problem \ref{pblm:largep}, that is, this positive density orthonormal set of spherical harmonics $\{u_i\}$ does not maximize the $L^p$ norm for $p>6$.

Following the same spirit as in the cases of $2<p\le6$, one can of course try to solve Problem \ref{pblm:largep} using zonal harmonics: We first choose a set of (non-orthogonal) zonal harmonics on the sphere with well separated poles, then modify them such that they are orthonormal and still close to zonal harmonics in $L^p$ norms for $p>6$. However, the main obstacle is that the zonal harmonics decrease away from the concentration points rather slowly comparing to the exponential decreasing of Gaussian beams, then prevents one to evaluate the deleted absolute row sums as we did in this section.
\end{rmk}

\section{Localization of $q_i$ and $u_i$}\label{sec:loc}
The construction of the orthonormal set $\{u_i\}_{i=1}^m$ from the Gaussian beams $\{q_i\}_{i=1}^m$ in Section \ref{sec:main} raises a natural question:
$$\text{What do $u_i$'s look like?}$$

Particularly, do they inherit the localization properties of Gaussian beams around great circles? In this section, we characterize the localization properties of $u_i$'s. Roughly speaking, they are still very close to the Gaussian beams in terms of mass localization. In the following discussion, we focus on $u_1$ and $q_1$, and assume that the pole of $q_1$ is the north pole without loss of generality.

\subsection*{Localization of the Gaussian beam $q_1$}
We know that the Gaussian beams $q_1$ concentrates around the equator $G_1$. Moreover, \eqref{eq:Gaussian} implies that $\forall\varepsilon>0$, there exists $c>0$ such that mass outside of the tube $G_1^w$ for $w=ck^{-1/2}$ is smaller than $\varepsilon$:
\begin{equation}\label{eq:qloc}
\|q_1\|_{L^2(\s^2\setminus G_1^w)}=1-\|q_1\|_{L^2(G_1^w)}\le\varepsilon.
\end{equation}

\subsection*{Localization of $u_1$ in $G_1^w$ with fixed width $w$ and density $D$} 
In particular, one deduces that from \eqref{eq:qloc}
$$\|q_1\|_{L^2(G_1^w)}\ge\frac12,$$
where $w=ck^{-1/2}$ for some fixed constant $c$. (Strictly speaking, out notation of ``fixed width $w$'' really means that $w$ has fixed dependence on $k$.)

Recall \eqref{eq:FRin6r} and \eqref{eq:r} that
$$1-6r\le F_{ii}\le1+6r,\quad\text{and}\quad R'_i(F)\le6r,$$
where
$$r=(7+1296D)c_0^{-1/D}+o(k)\to0\quad\text{as}\quad D\to0.$$

If we let $6r\le1/4$, then similar to \eqref{eq:up},
\begin{eqnarray*}
\|u_1\|_{L^2(G_1^\varepsilon)}&=&\left\|\sum_{j=1}^mF_{1j}q_j\right\|_{L^2(G_1^\varepsilon)}\\
&\ge&\|F_{11}q_1\|_{L^2(G_1^\varepsilon)}-\sum_{j=1,j\ne i}^m\|F_{1j}q_j\|_{L^2(\s^2)}\\
&\ge&|F_{11}|\|q_1\|_{L^2(G_1^\varepsilon)}-R'_1(F)\|q_j\|_{L^2(\s^2)}\\
&\ge&\frac34\cdot\frac12-\frac14=\frac18.
\end{eqnarray*}

This means that the $L^2$ mass of $u_1$ in the tube $G_1^w$ has a constant lower bound. To achieve a stronger localization statement of $u_1$ as in \eqref{eq:qloc} for $q_1$, we again have to further maneuver the density $D$. 

\subsection*{Localization of $u_1$ outside $G_1^w$ depending on $w$ and $D$} Resume the notation in \eqref{eq:qloc}, then,
\begin{eqnarray*}
\|u_1\|_{L^2(\s^2\setminus G_1^w)}=\left\|\sum_{j=1}^mF_{1j}q_j\right\|_{L^2(\s^2\setminus G_1^w)}&\le&\|F_{11}q_1\|_{L^2(\s^2\setminus G_1^w)}+\sum_{j=1,j\ne i}^m\|F_{1j}q_j\|_{L^2(\s^2)}\\
&\le&|F_{11}|\|q_1\|_{L^2(\s^2\setminus G_1^w)}+R'_1(F)\|q_j\|_{L^2(\s^2)}\\
&\le&(1+6r)\varepsilon+6r\\
&\le&\varepsilon.
\end{eqnarray*}
for $r$ small by choosing the density $D$ small, and $w=ck^{-1/2}$ for some $c$ depending on $\varepsilon$.

\section*{Acknowledgements}
Steve Zelditch told me Problem \ref{pblm:SZ} in \cite{SZ2} when I visited Northwestern University in May 2013, I thank him and the hospitality of Northwestern University. Alex Barnett explained to me the relation between Wigner D-matrix and spherical harmonics; Alan McIntosh suggested me to use $\VERT\cdot\VERT$ norm of matrices in the computation; Andrew Hassell encouraged me to calculate an explicit density bound in Theorem \ref{thm:main}. I thank them for the help throughout the preparation of this paper. I also want to thank Jean Bourgain for sending his unpublished note \cite{B3} to me.

\end{document}